\def\aut{\operatorname{Aut}}
\def\Sym{\operatorname{Sym}}
\def\exp{\operatorname{exp}}
\def\push{\operatorname{push}}
\def\spref{\operatorname{spref}}
\newtheorem{theorem}{Theorem}[section]
\newtheorem{proposition}[theorem]{Proposition}
\newtheorem{corollary}[theorem]{Corollary}
\newtheorem{lemma}[theorem]{Lemma}
\theoremstyle{definition}
\newtheorem{question}[theorem]{Question}
\newtheorem{remark}[theorem]{Remark}
\newtheorem{observation}[theorem]{Observation}
\newtheorem{example}[theorem]{Example}
\newtheorem{definition}[theorem]{Definition}
\newcommand{\N}{\mathbb{N}}
\newcommand{\Cn}{\mathfrak{C}_n}
\newcommand{\Cm}{\mathfrak{C}_m}
\newcommand\blfootnote[1]{%
  \begingroup
  \renewcommand\thefootnote{}\footnote{#1}%
  \addtocounter{footnote}{-1}%
  \endgroup
}
\newcommand{\concat}{\vert\!\vert}
\newcommand{\ew}{\varepsilon}
\newcommand{\seteq}{:=}
\newcommand{\vmg}{V_m(G)}
\newcommand{\vnh}{V_n(H)}
\begin{document}

\title[Embeddings between symmetric Thompson groups]{Some embeddings between symmetric R. Thompson groups}

\author{Julio Aroca}
\address{Instituto de Ciencias Matem\'aticas, Madrid, Spain.}
\email{julio.aroca@icmat.es}

\author{Collin Bleak}
\address{University of St Andrews, St Andrews, Scotland.}
\email{cb211@st-andrews.ac.uk}

\blfootnote{The first author wants to acknowledge financial support from the Spanish Ministry of Economy and Competitiveness, through the “Severo Ochoa Program for Centres of Excellence in R\&D” (SEV-2015-0554). The second author wishes to  acknowledge support from EPSRC grant EP/R032866/1 during the creation of this paper.
}
\keywords{R. Thompson groups, embeddings, topological conjugacy, FSS Groups}

\date{\today}

\begin{abstract}
Let $m\leq n\in \N$, and $G\leq \Sym(m)$ and $H\leq \Sym(n)$.  In this article we find conditions enabling embeddings between the symmetric R. Thompson groups $\vmg$ and $\vnh$. When $n\equiv 1 \mod(m-1)$, and under some other technical conditions, we find an embedding of $\vnh$ into $\vmg$ via topological conjugation. With the same modular condition we also generalise a purely algebraic construction of Birget from 2019 to find a group $H\leq \Sym(n)$ and an embedding of  $\vmg$ into $\vnh$. 
\end{abstract}

\maketitle



\section{Introduction}

This article concerns embeddability conditions for pairs of groups from the family of \emph{symmetric R. Thompson groups} 
$\{\vmg\}$.  The group $\vmg$ is the group $\vmg=\langle V_m\cup
G\rangle$, where $V_m\leq \aut(\Cm)$ is the Higman-Thompson group
denoted $G_{m,1}$ by Higman in \cite{HI}, acting on the Cantor space $\Cm\seteq \{0,1,\ldots, m-1\}^\omega$, while $G$ is a particular faithful representation of a finite group $\widetilde{G}\leq \Sym(m)$ in $\aut(\Cm)$.

The groups $\{\vmg\}$ have developed as groups of interest for a variety of reasons.  Firstly, they were singled out as natural groups of interest in \cite{NekraCuntz} and \cite{Roever}, and they arise naturally as a fundamental subfamily of Hughes' $\mathcal{F}\mathcal{S}\mathcal{S}$ groups \cite{Hughes}.  The paper \cite{BDJ} shows that for $n\geq2$, $V_m\cong \vmg$ if and only if $\widetilde{G}$ is semiregular (the nontrivial elements of $\widetilde{G}$ have no fixed points), and also, that for $m>3$ there exists $\widetilde{G},\widetilde{H}\in\Sym(m)$ with $\widetilde{G}\cong\widetilde{H}$ but where the induced groups $\vmg$ and $V_m(H)$ are not isomorphic (the orbit structure of the actions of the elements of the groups $\widetilde{G}$ and  $\widetilde{H}$ impacts the isomorphism types of the groups $\vmg$ and $V_m(H)$).  In another direction, in \cite{FarleyCoCF} Farley shows the symmetric R. Thompson groups are CoCF groups (see \cite{HRRT} for the definition of CoCF groups).  Thus, if one can show that some group in the family $\{\vmg\}$ fails to embed in $V=V_2$, then Lehnert's conjecture will be shown to be false (see \cite{LehnertDiss,LehnertSchweitzer, BMN}).

We investigate conditions on $m\leq n$, $G\leq \Sym(m)$, and $H\leq \Sym(n)$ that guarantee the existence of embeddings between the groups $\vmg$ and $\vnh$ (we now drop the ``tilde'' notation on the groups $G$ and $H$ when thinking of them as subgroups of $\Sym(m)$ and $\Sym(n)$, respectively).  Thus, this note can be thought of as a continuation of the investigations in \cite{BDJ}, and is partly inspired by the work of Birget (in \cite{B}, he gives a method to embed $V_2$ into $V_m$ for $2\leq m$ (embeddings in the other direction have been known since Higman's book \cite{HI})), and partly by considering some of the questions alluded to in the previous paragraph.  In this context, our two embedding results depend on the direction of the embedding ($\vmg\rightarrowtail \vnh$ or $\vnh\rightarrowtail \vmg$), and our constructed embeddings require in both cases the Higman condition $n\equiv 1\mod(m-1)$.

The embedding $\vmg\rightarrowtail \vnh$ is algebraic in nature, inspired by the embedding of Birget from $V_2$ into $V_n$ in \cite{B}, while the embedding $\vnh\rightarrowtail \vmg$ uses a topological conjugacy by rational group elements (see \cite{GNS}).

We can now state and discuss our main results.

\begin{theorem}\label{thm:MT1}
Let $n,m \geq 2$ be natural numbers such that $m<n$, and let $G \leq \Sym(m)$, $H \leq \Sym(n)$. Suppose that:
\begin{enumerate}
\item There exists an prefix code $\widetilde{A}$ of $\Cm$ such that $\vert \widetilde{A} \vert = n$,
\item the group $\mathcal{R}_{G}(\widetilde{A})$ is well defined, and 
\item $\mathcal{R}_{G}(\widetilde{A})$ and $H$ are cyclically isomorphic.
\end{enumerate} Then $\vnh$ embeds in $\vmg$.
\end{theorem}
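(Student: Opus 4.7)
The plan is to build the embedding by topological conjugation by a homeomorphism $\varphi\colon\Cn\to\Cm$ constructed from the prefix code $\widetilde{A}$, in the spirit of the classical embedding $V_n\hookrightarrow V_m$ available whenever $\Cm$ admits a maximal prefix code of size $n$ (which forces the Higman condition $n\equiv 1\bmod(m-1)$ advertised in the introduction).

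To set up, write $\widetilde{A}=\{a_0,\ldots,a_{n-1}\}$, and use hypothesis $(3)$ to choose the indexing so that the bijection $\{0,\ldots,n-1\}\leftrightarrow\widetilde{A}$ realizes the cyclic isomorphism $H\cong\mathcal{R}_G(\widetilde{A})$ as permutation groups on their respective ground sets. Then define $\varphi\colon\Cn\to\Cm$ recursively by $i_1i_2i_3\cdots\mapsto a_{i_1}a_{i_2}a_{i_3}\cdots$. Maximality of $\widetilde{A}$ makes $\varphi$ a homeomorphism of Cantor spaces, so conjugation by $\varphi$ provides an injective group homomorphism $\Phi\colon\aut(\Cn)\to\aut(\Cm)$; the task becomes showing $\Phi(\vnh)\subseteq\vmg$.

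Since $\vnh=\langle V_n\cup H\rangle$, it suffices to verify $\Phi(V_n)\subseteq\vmg$ and $\Phi(H)\subseteq\vmg$ separately. For the former, the standard calculation shows that conjugating a prefix-exchange on $\Cn$ by $\varphi$ yields a prefix-exchange on $\Cm$: each prefix in $\{0,\ldots,n-1\}^*$ gets replaced by the concatenation of the corresponding $a_i$'s, producing a new antichain in $\{0,\ldots,m-1\}^*$, and the result is a prefix-rearrangement of $\Cm$. Hence $\Phi(V_n)\subseteq V_m\subseteq\vmg$. For the latter, one unwinds the action: for $h\in H$, the element $\Phi(h)$ sends the cone at $a_i$ to the cone at $a_{h(i)}$ by bare prefix replacement. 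Under the indexing fixed at the outset, this action agrees with the action of the corresponding element of $\mathcal{R}_G(\widetilde{A})$, which by hypothesis $(2)$ is a well-defined element of $\vmg$ built from the $G$-action on $\Cm$ together with prefix rearrangements in $V_m$.

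The principal obstacle lies in the verification $\Phi(H)\subseteq\vmg$, where both hypotheses $(2)$ and $(3)$ are essential. Hypothesis $(3)$ aligns $H$ with $\mathcal{R}_G(\widetilde{A})$ via a single bijection that simultaneously determines $\varphi$, while hypothesis $(2)$ guarantees that the permutations in $\mathcal{R}_G(\widetilde{A})$ are realized inside $\vmg$ rather than living only as an abstract permutation group on $\widetilde{A}$. Injectivity of the resulting map $\vnh\hookrightarrow\vmg$ requires no additional argument, as $\Phi$ is already injective on the whole of $\aut(\Cn)$.
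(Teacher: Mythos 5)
Your proposal is correct in substance and follows essentially the same route as the paper: the paper's embedding $\iota$ is exactly conjugation by the homeomorphism $\varphi\colon\Cn\to\Cm$ induced by the complete prefix code $\widetilde{A}$; the paper merely expresses the conjugated elements as tables over the alphabet $\widetilde{A}$ and checks compatibility with expansions, pushings and composition, whereas you obtain well-definedness, the homomorphism property and injectivity for free from conjugation and shift the work to showing the image lies in $\vmg$, checked on the generating set $V_n\cup H$.

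One local point needs repair. For $h\in H$, the conjugate $\Phi(h)=\varphi\, h\, \varphi^{-1}$ is \emph{not} a bare prefix replacement sending the cone at $a_i$ to the cone at $a_{h(i)}$: it replaces every $\widetilde{A}$-block of a point of $\Cm$ at every level, i.e. it sends $a_{i_1}a_{i_2}a_{i_3}\cdots$ to $a_{h(i_1)}a_{h(i_2)}a_{h(i_3)}\cdots$. If it really were a bare prefix replacement, it would already lie in $V_m$ and hypotheses (2) and (3) would play no role, which should raise a flag. The correct justification, and the precise place where (2) and (3) enter, is this: with the indexing of $\widetilde{A}$ chosen via the cyclic isomorphism, the permutation $a_i\mapsto a_{h(i)}$ of $\widetilde{A}$ is the restriction to $\widetilde{A}$ of the letter-by-letter action on $\mathcal{A}_m^*$ of some $\sigma_h\in G$ (this is what well-definedness of $\mathcal{R}_G(\widetilde{A})$ provides); since $\widetilde{A}$ is a complete prefix code, every point of $\Cm$ factors uniquely into $\widetilde{A}$-blocks, and the block-wise action of $h$ then coincides with the letter-by-letter action of $\sigma_h$, so in fact $\Phi(h)=\sigma_h\in G\leq\vmg$. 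With that correction your argument goes through and agrees with the paper's proof.
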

For this first result, observe that we require a group $\mathcal{R}(G)$ to be well defined.  This group (when it exists) can be thought of in a natural way as a subgroup of $\Sym(n)$.  We also need $\mathcal{R}(G)$ to be cyclically isomorphic to $H$.  This is equivalent to the existence of an $n$-element complete prefix code $\widetilde{A}$ in $\mathcal{A}_m^*$ which is preserved by the action of the iterated permutations of $G$ on $\mathcal{A}_m^*$ and where the cycle structure of the action of $G$ on $\widetilde{A}$ is isomorphic (element-for-element) to the cycle structure of the action of $H$ on $\mathcal{A}_{n}$.

Thus, we have the following general observation.
\begin{observation}
In practical terms, natural applications of Theorem \ref{thm:MT1} occur by choosing $m$ and a prefix code that is closed under the action of iterated  permutations from some $G\leq \Sym(m)$.  Then, one immediately obtains a cyclically isomorphic group $H$ in $\Sym(n)$.
\end{observation}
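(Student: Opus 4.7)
The plan is to unwind definitions: the hypothesis of the observation already encodes conditions (1) and (2) of Theorem \ref{thm:MT1}, so the only actual work is to manufacture the subgroup $H\leq \Sym(n)$ and verify that it is cyclically isomorphic to $\mathcal{R}_G(\widetilde{A})$.

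First I would note that, by hypothesis, $\widetilde{A}$ is an $n$-element prefix code in $\mathcal{A}_m^*$ stable under the iterated-permutation action of $G$; in particular, this action restricts to a genuine permutation action of $G$ on the finite set $\widetilde{A}$, so $\mathcal{R}_G(\widetilde{A})$ is well defined as a subgroup of $\Sym(\widetilde{A})$, giving condition (2) of Theorem \ref{thm:MT1} for free. Next, since $|\widetilde{A}|=n$, I would choose any bijection $\phi\colon \widetilde{A}\to\mathcal{A}_n$ and transport the restricted action through $\phi$, defining
\[
H\seteq \phi\,\mathcal{R}_G(\widetilde{A})\,\phi^{-1}\leq \Sym(n).
\]
By construction $\phi$ conjugates the $G$-action on $\widetilde{A}$ to the $H$-action on $\mathcal{A}_n$, so each element of $\mathcal{R}_G(\widetilde{A})$ and its image in $H$ have identical cycle types on $\widetilde{A}$ and on $\mathcal{A}_n$ respectively. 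This is precisely the element-for-element matching of cycle structures described in the paragraph immediately following Theorem \ref{thm:MT1}, so condition (3) is established and Theorem \ref{thm:MT1} applies, yielding the desired $H$.

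There is no real obstacle here: the content of the observation is the bookkeeping fact that once a $G$-stable prefix code of the right size has been exhibited, the candidate $H$ requires no further search — any set-theoretic relabelling of $\widetilde{A}$ as $\mathcal{A}_n$ produces a valid choice, and cyclic isomorphism is automatic because the cycle type of a permutation depends only on the partition of the underlying set into cycles and not on the labels attached to the points. The only minor subtlety worth flagging is independence of choice of $\phi$: different bijections produce $\Sym(n)$-conjugate subgroups $H$, hence cyclically isomorphic ones, so the resulting application of Theorem \ref{thm:MT1} is essentially canonical.
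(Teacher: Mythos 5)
Your proposal is correct and matches what the paper does implicitly: the observation is stated without a formal proof, and its justification is exactly the definition-unwinding you give, with the paper later fixing the particular bijection $\widetilde{A}\to\mathcal{A}_n$ induced by the dictionary ordering where you allow an arbitrary one (harmless, as you note, since different choices give $\Sym(n)$-conjugate, hence cyclically isomorphic, subgroups $H$).
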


\begin{theorem}\label{thm:MT2}
Let $n,m \geq 2$ be natural numbers such that $n = k(m-1)+ m$ for some $k \geq 1$, and let $G \leq \Sym(m)$. Let $H = G_{ext} \leq \Sym(n)$ the extended symmetric group of $H$, whose elements act as the elements of $H$ on the first $m$ elements, and act as the identity on the remaining $n-m$.

Then $\vmg$ embeds in $\vnh$.
\end{theorem}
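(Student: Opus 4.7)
The plan is to generalize Birget's purely algebraic embedding $V_{2}\hookrightarrow V_{n}$ from \cite{B} to the symmetric setting. The Higman condition $n=k(m-1)+m$ furnishes an \emph{inflation} operation carrying an $m$-ary tree with $\ell$ leaves to an $n$-ary tree: at every internal node $w$ of the $m$-ary tree one adjoins the new children $w\cdot m,\ w\cdot(m+1),\ \ldots,\ w\cdot(n-1)$ labelled by the extra letters. Since an $m$-ary tree with $\ell$ leaves has $(\ell-1)/(m-1)$ internal nodes, the inflated tree has $\ell+(n-m)(\ell-1)/(m-1)$ leaves, and this number depends only on $\ell$. I would define the candidate embedding $\Phi:\vmg\to\vnh$ via two compatible rules: $\Phi(g)\seteq g_{\mathrm{ext}}\in H$ for $g\in G$, and $\Phi(v)$ equal to the element of $V_{n}$ represented by the inflated data $(A^{\sharp},B^{\sharp},\sigma^{\sharp})$ for $v\in V_{m}$ with representative $(A,B,\sigma)$. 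One then extends $\Phi$ to $\vmg=\langle V_{m}\cup G\rangle$ and verifies it is a well-defined injective homomorphism.

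The extension of $\sigma$ to $\sigma^{\sharp}$ on the newly added ``extra'' leaves is the central combinatorial step. I would require the rule to depend only on the extra letter $c\in\{m,\ldots,n-1\}$ and on the tree structures $T_{A},T_{B}$, and to be compatible with one-step $m$-ary expansion of $(A,B,\sigma)$; the latter forces well-definedness of $\Phi|_{V_{m}}:V_{m}\to V_{n}$. Showing $\Phi|_{V_{m}}$ is a homomorphism reduces to a direct calculation on representatives: refining two $V_{m}$-elements to a common depth makes the composition of their inflations agree with the inflation of their composition.

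The key compatibility needed to glue the two definitions is $\Phi(gvg^{-1})=g_{\mathrm{ext}}\Phi(v)g_{\mathrm{ext}}^{-1}$, and here the choice $H=G_{\mathrm{ext}}$ is essential: since $g_{\mathrm{ext}}$ fixes each letter in $\{m,\ldots,n-1\}$, the letter-wise action of $g_{\mathrm{ext}}$ sends an extra $w\cdot c$ to $g(w)\cdot c$, which is again an extra. One verifies directly that $g_{\mathrm{ext}}(A^{\sharp})=(g(A))^{\sharp}$ and $g_{\mathrm{ext}}(B^{\sharp})=(g(B))^{\sharp}$, so if the rule for $\sigma^{\sharp}$ on extras depends only on the extra letter and on the letter-wise $G$-action, the equivariance is automatic. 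Injectivity of $\Phi$ follows because the inflation is reversible (deleting from $A^{\sharp}$ every word containing a letter $\geq m$ recovers $A$), combined with the obvious injectivity of $g\mapsto g_{\mathrm{ext}}$. The main obstacle I anticipate is pinning down a canonical rule for $\sigma^{\sharp}$ on the extras that is simultaneously refinement-invariant and $G$-equivariant: the naive ``identity on extras'' rule requires $T_{A}=T_{B}$, which generally holds only after refining $(A,B,\sigma)$ to a uniform depth, so either one must argue independence of the depth of refinement, or, as may be more convenient following Birget, allow $\sigma^{\sharp}$ to carry small $H$-labels so that the inflation lands in $\vnh$ rather than just in $V_{n}$.
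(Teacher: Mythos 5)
You are following the same overall route as the paper (generalising Birget's construction: attach the surplus letters $a_m,\dots,a_{n-1}$ at the strict prefixes of the $m$-ary codes, extend permutations by $g\mapsto g_{\mathrm{ext}}$, and check compatibility with refinement and composition), but your proposal defers precisely the step that constitutes the proof. The entire content of the argument is a canonical, expansion-compatible matching between the extra leaves of the inflated domain tree and those of the inflated range tree. The paper supplies this via the successor construction of Definition \ref{successor}: each extra node $w\concat a_j$ with $w\in\spref(P)$ and $j\geq m$ is allocated, greedily in dictionary order while running through the leaves of $P$ in \emph{reverse} dictionary order, as the $i$-th successor of a unique leaf $p_s$; Lemma \ref{lem:successor} then proves the non-obvious fact that this allocation is forced under a single expansion (the new leaves $p_ia_{m-1},\dots,p_ia_1$ absorb the freshly created extra nodes $p_ia_m,\dots,p_ia_{n-1}$, while $p_ia_0$ inherits the successors of $p_i$). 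The extra leaf attached to $p_s$ as its $i$-th successor is then sent to the $i$-th successor of $q_s$, where the successors of $Q$ are allocated following the order of the $p$'s, \emph{not} the intrinsic dictionary order of $Q$; so the rule genuinely depends on the leaf bijection, contrary to your desideratum that it depend only on the extra letter and the tree shapes $T_A,T_B$. Without an analogue of Definition \ref{successor} and Lemma \ref{lem:successor} you cannot even verify that $\Phi|_{V_m}$ is well defined on equivalence classes of tables, and the homomorphism, equivariance and injectivity checks have nothing to act on: what you have written is a plan in which the key lemma is labelled ``the main obstacle I anticipate.''

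Moreover, the fallbacks you sketch do not repair this. You cannot in general refine a representative $(A,B,\sigma)$ of an element of $V_m$ until $T_A=T_B$: expansions refine domain and range simultaneously, so forcing the domain to be the uniform code of depth $d$ typically forces the range to be non-uniform (already for the element of $V_2$ with table $0\mapsto 00$, $10\mapsto 01$, $11\mapsto 1$ no common refinement equalises the two trees), and equality after refinement occurs only for the special elements preserving a uniform partition; hence ``identity on extras after refining to uniform depth'' is unavailable, and ``argue independence of the depth of refinement'' presupposes the well-definedness in question. Decorating the extra leaves with $H$-labels is, in effect, what the paper does by placing $\tau'_i$ on the successor cones, but only \emph{after} the successor matching has been fixed, so it does not remove the need for that matching. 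Finally, note a normalisation you omit and which the successor machinery requires: the paper first pushes the whole picture into the cone below the largest letter, using the codes $a_{m-1}\concat P$ and $a_{m-1}\concat Q$ and acting as the identity on $a_0,\dots,a_{m-2}$ and on $a_{m+k},\dots,a_{n-1}$, exactly so that the root-level extra letters dominate every code element in dictionary order and the smallest leaves have well-defined successors.
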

Thus, to achieve such an embedding, the main obstruction is that $m$ and $n$ satisfy the Higman Condition, i.e.:
\begin{corollary} Let $m\leq n$ so that $n\equiv m\mod (m-1)$, and let $G\leq \Sym(m)$.  Then, there exists $H\leq \Sym(n)$ so that $\vmg\rightarrowtail \vnh$.
\end{corollary}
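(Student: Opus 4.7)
The plan is to derive the corollary as a direct consequence of Theorem \ref{thm:MT2}, after a brief adjustment to cover the degenerate case $m=n$ that is not handled by that theorem. The main observation is that the modular condition $n\equiv m \mod (m-1)$ appearing in the corollary is exactly equivalent to saying $n=k(m-1)+m$ for some $k\geq 0$, where $k\geq 1$ recovers the precise hypothesis of Theorem \ref{thm:MT2}. (One could equally phrase this as $n\equiv 1 \mod (m-1)$ since $m\equiv 1 \mod (m-1)$, linking with the classical Higman condition mentioned in the introduction.)

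First, I would dispose of the case $n=m$: taking $H=G$ regarded as a subgroup of $\Sym(n)=\Sym(m)$, the groups $\vmg$ and $\vnh$ coincide, and the identity map provides the required embedding.

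For the main case $m<n$, the hypothesis $n\equiv m \mod(m-1)$ together with $n-m>0$ forces $n-m$ to be a positive multiple of $m-1$. Hence there exists some integer $k\geq 1$ with $n=k(m-1)+m$, matching the parametric hypothesis of Theorem \ref{thm:MT2} exactly. Setting $H=G_{ext}\leq \Sym(n)$, the extended symmetric group whose elements act by $G$ on the first $m$ symbols of $\{0,1,\dots,n-1\}$ and fix the remaining $n-m$ symbols, I would then invoke Theorem \ref{thm:MT2} to obtain the desired embedding $\vmg\rightarrowtail \vnh$.

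There is no substantive obstacle in this argument: the corollary is essentially a repackaging of Theorem \ref{thm:MT2} in the more familiar language of the Higman condition, with the only administrative issue being the separation of the trivial case $m=n$ from the nontrivial case covered by the theorem. The content is really in Theorem \ref{thm:MT2} itself, and the corollary serves to emphasize that, starting from any $G\leq \Sym(m)$, the existence of the receiving group $H$ at level $n$ is automatic whenever the modular Higman condition is satisfied.
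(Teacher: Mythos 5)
Your proposal is correct and takes essentially the same route as the paper: the corollary is presented there as an immediate consequence of Theorem \ref{thm:MT2}, obtained by translating $n\equiv m \mod(m-1)$ into $n=k(m-1)+m$ and taking $H=G_{ext}$. Your explicit separation of the degenerate case $m=n$ (where the identity embedding with $H=G$ suffices) is a reasonable administrative addition that the paper leaves implicit.
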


As mentioned above, for given $m$ and $G$ semiregular, the paper \cite{BDJ} shows $\vmg\cong V_m$, while Higman's book \cite{HI} gives an embedding of $V_m$ into R. Thompson's group $V=V_2$.  The semiregularity condition above has to do with local groups of germs of the action of $\vmg$ on $\Cm$ (see \cite{BDJ,BL}).  Our topological embedding preserves the local groups of germs, while the algebraic embeddings (for $G$ non-trivial) produce complicated local groups of germs.  Thus, it is impossible to chain our families of embeddings together to get an embedding from $\vmg$ into $V_2$ when $G$ is not semi-regular.

We therefore ask the following question:
\begin{question}
Does there exist $m\in \N$ and $G\in \Sym(m)$ so that $G$ is not semiregular, but where there is an embedding from $\vmg$ into $V_2$?
\end{question}

{\flushleft{\it Acknowledgements:}}\\
The authors would like to thank Javier Aramayona, Jim Belk, and Matthew G. Brin for numerous and enjoyable conversations around the material in this note.


\section{Symmetric Thompson's groups}

In this section, we introduce symmetric Thompson's groups, giving an easy way to express its elements as tables (this corresponds to a generalised construction of Higman used by Scott and R\"over in the creation of their extensions of $V$ (\cite{ScottI,ScottII,ScottIII,Roever}).   

\subsection{Tables} For natural $n>1$, let $\Cn$ be the $n$-adic Cantor set, which is constructed inductively as follows: $\Cn^1$ corresponds to first subdividing $\Cn^0= [0,1]$ into $2n-1$ closed intervals of equal length (so, sharing endpoints with neighbours), numbered $1, \ldots, 2n-1$ from left to right, and then taking the collection of odd-numbered sub-intervals. Next, $\Cn^2$ is obtained from $\Cn^1$ by applying the same procedure to each of the intervals forming $\Cn^1$, and so on.  Then, $C_n$ is the limit of this process, so that 
\[
\Cn=\cap_i \Cn^i.
\]  Now, let $\mathcal{A}_n = \{0,\dots,n-1\}$ and give it the discrete topology. It is easy to build a direct homemorphism from the space $\mathcal{A}_n^\N$ equipped with the product topology to $\Cn$, so every element $\zeta \in \Cn$ can be expressed as an infinite word $\zeta = w_1w_2\dots $, where $w_i \in \mathcal{A}_n$.  It is a classical result of Brouwer from \cite{BRO} that all of the spaces in the set $\{\Cn\}$ are abstractly homeomorphic to each other.

We denote by $\mathcal{A}_n^*$ the set of finite words in $\mathcal{A}_n$. The empty word $\varepsilon$ is also in $\mathcal{A}_n^*$.

\begin{definition}[Concatenation]
Let $u = u_1u_2\dots u_k, u_i \in \mathcal{A}_n$ be a finite word and $v \in \mathcal{A}_n^* \cup \mathcal{A}_n^{\N}$ with $v=v_1v_2\ldots$ (where for all valid indices $i$ we have  $v_i\in \mathcal{A}_n$) . The \textit{concatenation} of $u$ with $v$ is the (finite or infinite) word: $$u \concat v = u_1u_2\dots u_kv_1v_2\ldots.$$ 
\end{definition}

With concatenation being a fundamental operation, we will often just write the concatenation of two strings without the formal concatenation operator, that is, we might write $u\concat v$ as simply $uv$, reserving the formal use of ``$\concat$'' for situations where we wish to stress that a concatenation is occuring.

\begin{definition}[Prefix order]
Let $u \in \mathcal{A}_n^*$  and $v \in \mathcal{A}_n^{*}\cup \mathcal{A}_n^{\N}$.  We say that $u$ is a \textit{prefix} of $v$ ($u \leq_{pref} v$) if $v = u\concat w$, for some $w \in\mathcal{A}_n^{*}\cup \mathcal{A}_n^{\N}$.
\end{definition}

Note that this property is transitive for finite length words: If $u \leq_{pref} v$ and $v \leq_{pref} w$ then $u \leq_{pref} w$. In addition, $u \leq_{pref} u$, as $\varepsilon \in \mathcal{A}_n^*$.  That is, $\leq_{pref}$ provides a partial order on $\mathcal{A}_n^{*}$.

\begin{definition}[Prefix code]
Let $S$ be a finite set of words in $\mathcal{A}_n^*$. Then $S$ is an \textit{prefix code} of $\Cn$ if for every infinite word $\zeta \in \mathcal{A}_n^{\N}$ there exists one and only one word $s \in S$ such that $s \leq_{pref} \zeta$. (Specifically, a prefix code is a complete anti-chain for the partial order $\leq_{pref}$.)
\end{definition}

For convenience, we will use the following notation: let $\sigma \in \Sym(n)$ be an element of the symmetric group of $n$ elements. Given any word $\zeta = z_1z_2z_3 \dots \in \mathcal{A}_n^\mathbb{*} \cup \mathcal{A}_n^\N$ we define $\sigma(\zeta) = \sigma(z_1)\sigma(z_2)\sigma(z_3)\dots \in \mathcal{A}_n^\mathbb{*} \cup \mathcal{A}_n^\N$. Let $\sigma_i \in H \leq \Sym(n)$.   (Note that we are using left actions here, so if $\sigma,\tau\in \Sym(n)$ then the product $\tau\sigma$ means employ the permutation $\sigma$ first, and then employ $\tau$).

With the above notation, an element of $\vnh$ is a homeomorphism of $\Cn$  that can be (non-uniquely) described by a \textit{table} as follows:
$$v = \begin{bmatrix}
p_1 & p_2 & \cdots & p_k \\
\sigma_1 & \sigma_2 & \cdots & \sigma_k\\
q_1 & q_2 & \cdots & q_k \\
\tau_1 & \tau_2 & \cdots & \tau_k
\end{bmatrix},$$
where $p_i,q_i \in \mathcal{A}_n^*$, $\sigma_i, \tau_i \in H$ and such that the sets $P = \{p_i\}_{i=1}^k$ and $Q = \{q_i\}_{i=1}^k$ are prefix codes of $\Cn$. We say that $k \geq 1$ is the \textit{length} of the table. The homeomorphism of $\Cn$ induced can be defined as follows: for every infinite word $\zeta$ such that $p_i\leq_{pref}\zeta$, that is $\zeta = p_i \concat u$ for some $u \in \mathcal{A}_n^\N$, we have 
$$v: p_i \concat \sigma_i(u) \rightarrow q_i \concat \tau_i(u).$$
There are infinitely many tables which induce the same homeomorphism of $\Cn$. We proceed to define the four basic moves we can perform on a table in order to obtain an equivalent one (the four basic moves naturally split as two essential sorts of moves, together with their inverse (or ``near-inverse'') moves). 

The first basic move is \textit{expansion}: for a given prefix code $$P = \{p_1, \dots, p_i , \dots , p_k\},$$ we can consider $$\widetilde{P} = \{p_1, \dots, p_i0, \dots, p_i(n-1) , \dots , p_k\}$$ by expanding the word $p_i$. This expansion not only occurs in $P$, as the image of $p_i$ must be also expanded. So we have $$\widetilde{Q} = \{q_1, \dots, q_i0, \dots, q_i(n-1) , \dots  ,q_k\}.$$ It is easy to see that both $\widetilde{P}$ and $\widetilde{Q}$ are also prefix codes. Then:
$$\begin{bmatrix}
p_1 & \cdots& p_i & \cdots & p_k \\
\sigma_1 & \cdots& \sigma_i & \cdots & \sigma_k \\
q_1 & \cdots& q_i & \cdots & q_k \\
\tau_1 & \cdots& \tau_i & \cdots & \tau_k
\end{bmatrix} \equiv 
\begin{bmatrix}
p_1 & \cdots& p_i \sigma_i(0) & \cdots & p_i \sigma_i(n-1) & \cdots & p_k \\
\sigma_1 & \cdots& \sigma_i & \cdots& \sigma_i & \cdots & \sigma_k \\
q_1 & \cdots& q_i \tau_i(0) & \cdots & q_i \tau_i(n-1) & \cdots & q_k \\
\tau_1 & \cdots& \tau_i & \cdots& \tau_i & \cdots & \tau_k \\
\end{bmatrix}.$$  One can always perform an expansion, but not all tables look like the result of an expansion.  Naturally, the inverse of an expansion (when it is defined) is called a \emph{reduction}.

The second move we can perform on a table is \textit{pushing down} (resp. \textit{pushing up}) the action of all $\sigma_i$ such that $\sigma_i = Id$ for every $i \in \{1, \dots, k\}$ (resp. $\tau_i = Id$ for every $i \in \{1, \dots, k\}$):
\begin{align*}
\begin{bmatrix}
p_1 & p_2 & \cdots & p_k \\
\sigma_1 & \sigma_2 & \cdots & \sigma_k \\
q_1 & q_2 & \cdots & q_k \\
\tau_1 & \tau_2 & \cdots & \tau_k 
\end{bmatrix} &\equiv 
\begin{bmatrix}
p_1 & p_2 & \cdots & p_k \\
Id & Id & \cdots & Id\\
q_1 & q_2 & \cdots & q_k \\[2pt]
\tau_1\sigma_1^{-1} & \tau_2\sigma_2^{-1} & \cdots & \tau_k\sigma_k^{-1}
\end{bmatrix} \\ &\equiv 
\begin{bmatrix}
p_1 & p_2 & \cdots & p_k \\[2pt]
\sigma_1\tau_1^{-1} & \sigma_2\tau_2^{-1} & \cdots & \sigma_k\tau_k^{-1} \\[2pt]
q_1 & q_2 & \cdots & q_k \\
Id & Id & \cdots & Id\\
\end{bmatrix}.
\end{align*}

It is not hard to see that a table gives a well-defined homeomorphism of the appropriate Cantor space, and if two tables are related by a finite sequence of our four moves then they represent the same homeomorphism.  The reader can also check that if a homeomorphism of an appropriate Cantor space is represented by two tables, then in fact these tables are in the same equivalence class under our four basic moves on tables.  Thus, we can just consider our group elements to be the equivalence classes of tables with the aforementioned relations. 

The {composition} of two different elements $u,v\in \vnh$ is easy to compute using the equivalences. Let $u$, $v \in \vnh$, such that $u$ takes the prefix code $P$ to the prefix code $Q$ (resp. $v$ takes $P'$ to $Q'$). We need to find a prefix code $S$ such that, for every element $s \in S$, there exists one element $q \in Q$ and one element $p' \in P'$ such that $q\leq_{pref} s$ and $p'\leq_{pref} s$. This can always be done by expanding $P'$ and $Q$ until we obtain the same prefix code $S$. Thus, without loss of generality:
$$u = \begin{bmatrix}
p_1 & p_2 & \cdots & p_k \\
\sigma_1 & \sigma_2 & \cdots & \sigma_k \\
s_1 & s_2 & \cdots & s_k \\
\tau_1 & \tau_2 & \cdots & \tau_k
\end{bmatrix}, \quad v = \begin{bmatrix}
s_1 & s_2 & \cdots & s_k \\
\sigma'_1 & \sigma'_2 & \cdots & \sigma'_k \\
q'_1 & q'_2 & \cdots & q'_k \\
\tau'_1 & \tau'_2 & \cdots & \tau'_k
\end{bmatrix}.$$

Finally, we push up the action of $u$ and push down the action of $v$:
\begin{align*}
u&  = \begin{bmatrix}
p_1 & p_2 & \cdots & p_k \\
\sigma_1\tau_1^{-1} & \sigma_2\tau_2^{-1} & \cdots & \sigma_k\tau_k^{-1}\\
s_1 & s_2 & \cdots & s_k \\
Id & Id & \cdots & Id
\end{bmatrix},\\ v & = \begin{bmatrix}
s_1 & s_2 & \cdots & s_k \\
Id & Id & \cdots & Id\\
q'_1 & q'_2 & \cdots & q'_k \\[2pt]
\tau'_1(\sigma'_1)^{-1} & \tau'_2(\sigma'_2)^{-1} & \cdots & \tau'_k(\sigma'_k)^{-1}
\end{bmatrix},
\end{align*}
so 
$$v \circ u  = \begin{bmatrix}
p_1 & p_2 & \cdots & p_k \\
\sigma_1\tau_1^{-1}& \sigma_2\tau_2^{-1} & \cdots & \sigma_k\tau_k^{-1}\\
q'_1 & q'_2 & \cdots & q'_k \\[2pt]
\tau'_1(\sigma'_1)^{-1} & \tau'_2(\sigma'_2)^{-1} & \cdots & \tau'_k(\sigma'_k)^{-1}
\end{bmatrix}.$$

We sum up the previous discussion in the following proposition:
\begin{proposition}
$\vnh$ is a group with the composition.
\end{proposition}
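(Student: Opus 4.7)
The plan is to verify the group axioms for the operation introduced above, noting that the elements of $\vnh$ are equivalence classes of tables under the four basic moves, and that each equivalence class determines a well-defined homeomorphism of $\Cn$ (a fact the excerpt already stated). Since the composition formula produced above represents composition of the underlying homeomorphisms, \emph{associativity} is inherited from composition of functions. \emph{Closure} is likewise immediate, as the formula outputs another valid table.

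For the \emph{identity} I would take the length-one table with $p_1 = q_1 = \ew$ and $\sigma_1 = \tau_1 = \mathrm{Id}$; expanding this against any representative of $u \in \vnh$ over $u$'s domain prefix code and applying the composition formula yields $u$ on both sides. For \emph{inverses}, given $u$ with columns $(p_i; \sigma_i; q_i; \tau_i)$ I would propose the table obtained by swapping the top and bottom halves, namely $(q_i; \tau_i; p_i; \sigma_i)$, and verify $u \circ u^{-1}$ and $u^{-1} \circ u$ directly: using the common prefix code $\{q_i\}$ (respectively $\{p_i\}$), the permutation pieces cancel column by column, yielding the identity table up to expansion.

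The main obstacle is the \emph{well-definedness} of composition on equivalence classes: the formula requires picking representatives of $u$ and $v$ sharing a prefix code, so I must check that the equivalence class of $v \circ u$ does not depend on the specific choices made. This reduces to verifying that each of the four basic moves applied to either $u$ or $v$ before composing produces a composite table that is equivalent, under the same moves, to the one obtained without the prior move. The push-up/push-down cases reduce to column-by-column identities in $H$ and are routine. The delicate case is expansion: expanding a single column in, say, $u$'s bottom prefix code forces a matching refinement of the shared prefix code $S$, and one must check that after this bookkeeping the new composite table is an expansion of the original. This is tedious but mechanical, and is the only place where real care is needed.
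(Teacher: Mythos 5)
Your proposal is correct and, in outline, matches what the paper actually does: the paper's ``proof'' is the discussion preceding the proposition, in which elements of $\vnh$ are homeomorphisms of $\Cn$ described by tables up to the four basic moves, and the composition of two such elements is shown to lie in $\vnh$ by passing to a common prefix code. The one place where you diverge is in how much work you assign to well-definedness. Since you (like the paper) take as given that every table determines a homeomorphism, that the four moves do not change that homeomorphism, and that two tables inducing the same homeomorphism are related by the moves, composition of equivalence classes is automatically well defined: any composite table produced by the common-refinement recipe represents the composite of the two homeomorphisms, so different choices of representatives yield tables inducing the same homeomorphism and hence equivalent tables. Your proposed ``delicate'' verification that expansion interacts correctly with the composition bookkeeping is therefore redundant; it would only be needed in a purely combinatorial development where tables are not interpreted as maps, and in that setting you could not then inherit associativity from composition of functions either, so you should commit to one viewpoint and use it consistently. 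Your explicit identity table ($p_1=q_1=\ew$ with trivial permutations) and the inverse obtained by swapping the top and bottom halves of a table are both correct, and they are exactly the ingredients the paper leaves unstated for the reader.
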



\section{Topological Embeddings}

In this section, we present topological embeddings between symmetric Thompson's groups. The key idea is, given any group $\vnh$, to translate the action of an element $\sigma \in H$ into a permutation $\widetilde{\sigma}$ of the elements of some prefix code of $\Cm$. Therefore, $\widetilde{\sigma} \in \vmg$ for some $G$.

Our method will be first to understand when actions on prefix codes over smaller alphabets can represent embeddings of permutations on larger alphabets which commute with our core operations of expansion and contraction of prefix codes.  With that understanding in hand, we can then build the desired embedding from a group $\vnh$ to a group $\vmg$ for $m\leq n$.

We first establish some useful definitions.

\subsection{The Root Group \texorpdfstring{$\boldsymbol{\mathcal{R}_{G}(S)}$}{RGS}}

Given a linear order $\leq$ on $\mathcal{A}_n$ (we choose $0<1<\ldots<n-1$), there is an induced standard dictionary order:
\begin{definition}[Dictionary order]
Let $\mathcal{A}_n = \{a_0, \dots, a_{n-1}\}$ be an alphabet with linear order $\leq$. We define the \textit{dictionary order} on $\mathcal{A}_n^*$ as follows. Let $u,v \in \mathcal{A}_n^*$, then $u \leq_{dict} v$ if and only if:
\begin{enumerate}
\item $u \leq_{pref} v$,
\item $u \not\leq_{pref} v$ and there exist $p, s, t \in \mathcal{A}_n^*$ and $\alpha, \beta \in \mathcal{A}_n$ such that $u = p\alpha s, v = p \beta t$, and $\alpha < \beta$.
\end{enumerate} 
\end{definition}

Let $2\leq m <n\in\N$ be fixed, and let $G \leq \Sym(m)$. We assume for the construction Higman's condition: $n = k(m-1)+ 1$ for some $k \geq 0$ (so, $n\equiv 1\mod (m-1)$).  Then, we can find $S = \{s_0, \dots, s_{n-1}\}$ an ordered prefix code of $\Cm$ of length $n$ (using the dictionary order).   Observe for now that if $\sigma(S) = S \ \forall \sigma \in G$, then $\sigma: S \rightarrow S$ will induce the desired rearrangement $\widetilde{\sigma}$ of the elements of $S$, that is, $\widetilde{\sigma} \in \Sym_S\cong \Sym(n)$. 

We will be interested in the set $\mathcal{T}$ of triples $(m,n,G)$
where $m\leq n$, $n\equiv 1\mod(m-1)$, and $G\in \Sym(m)$.  We will say a triple $(m,n,G)\in \mathcal{T}$ is \emph{satisfiable} if there is a prefix code $S\subset X_m^*$ with $|S|=n$ and where for each $\sigma\in G$ the action of $\sigma$ on $X_m^*$ preserves $S$.  In this case we say \emph{$S$ is a solution for the triple $(m,n,G)$.}

\begin{definition}[Root group]
Given a triple $(m,n,G)\in\mathcal{T}$ and a solution $S$, 
we define the \textit{root group} $\mathcal{R}_{G}(S)$ to be the group of permutations of $S$ induced by the action of $G$ on $S$.
\end{definition}

It is the case that not every triple $(m,n,G)\in\mathcal{T}$ admits a solution $S$.  However, when it does admit a solution $S$, it is immediate that $\mathcal{R}_{G}(S)$ is isomorphic to $G$.

\begin{definition}[Cycle type]
Let $\sigma \in \Sym(n)$. The \textit{cycle type c} of $\sigma$ is the multiset (a set, but allowing multiple elements that are equal) of lengths of the cycles in the cycle decomposition of $\sigma$. We say that two subgroups $H, H' \in \Sym(n)$ are \textit{cyclically isomorphic} if there exists an isomorphism $\psi: H \rightarrow H'$ which preserves the cycle type of every permutation, that is, $c(\sigma) = c(\psi(\sigma))$ for all $\sigma \in H$. 
\end{definition}

\begin{remark}
 Subgroups $H$ and $H'$ are cyclically isomorphic if and only if they are conjugate in $\Sym(n)$, but our focus is on cycle structure and that is why we are using the language we have chosen. (N.B., there exist exotic automorphisms of $\Sym(6)$ which do not arise by conjugation, but these automorphisms change the cycle structure of some elements of order two.) 
\end{remark}

\subsection{The induced group \texorpdfstring{$\boldsymbol{\mathcal{G}}$}{G}} 

We proceed to define the group $\mathcal{G} < \vmg$ such that $\vnh$ is isomorphic to $\mathcal{G}$. 

Let $\vmg$ and $\vnh$ be two symmetric Thompson's groups such that $m$ and $n$ fulfill Higman's condition. Let $\widetilde{\mathcal{A}}_m = \{a_0,a_1,\dots,a_{n-1}\}$ be an ordered prefix code of $\Cm$ of length $n$, such that $\mathcal{R}_{G}(\widetilde{\mathcal{A}}_m)$ is well defined. We may think of  $\mathcal{R}_{G}(\widetilde{\mathcal{A}}_m)$ as a subgroup of $\Sym(n)$ by using the bijection from $\widetilde{\mathcal{A}}_m$ to $\mathcal{A}_n$ induced by the lexicographic ordering of $\widetilde{\mathcal{A}}_m$.  Thus, we can define $\mathcal{G}$ as the set of equivalence classes of tables of the form:
$$v = \begin{bmatrix}
p_1 & p_2 & \cdots & p_k \\
\sigma_1 & \sigma_2 & \cdots & \sigma_k \\
q_1 & q_2 & \cdots & q_k \\
\tau_1 & \tau_2 & \cdots & \tau_k
\end{bmatrix},$$
where $\sigma_i,\tau_i \in \mathcal{R}_{G}(\widetilde{\mathcal{A}}_m) \ \forall i$ and the prefix codes $P$ and $Q$ consist of words in the alphabet $\widetilde{\mathcal{A}}_m$, that is, every $p_i$ and $q_i$ is a nonempty concatenation of elements of $\widetilde{\mathcal{A}}_m$. 

It is straightforward to check that $\mathcal{G}$ is a group, since the concatenation of two words in $\widetilde{\mathcal{A}}_m^*$ is another word in $\widetilde{\mathcal{A}}_m^*$. Tables of $\mathcal{G}$ are well defined by expansions and pushings, and the action of an element $\sigma \in \mathcal{R}_{G}(\widetilde{\mathcal{A}}_m)$ on a word in $\mathcal{\widetilde{\mathcal{A}}}_m$ gives another word in $\mathcal{\widetilde{\mathcal{A}}}_m$ by the definition of root group, so the composition of any two elements in $\mathcal{G}$ gives another element of $\mathcal{G}$.

\begin{proof}[Proof of Theorem \ref{thm:MT1}]
Let $\vmg$ and $\vnh$ two symmetric Thompson's groups. Let $\mathcal{A}_n = \{0,1,\dots,n-1\}$ and $\widetilde{\mathcal{A}}_m = \{a_0,a_1,\dots,a_{n-1}\}$ such that $\mathcal{R}_G(\widetilde{\mathcal{A}}_m)$ is well defined. We define the following \textit{translating map}:
$$ \begin{array}{cccc} \widetilde{t}: & \mathcal{A}_n &\longrightarrow & \widetilde{\mathcal{A}}_m \\
& i & \longrightarrow &  a_i,
\end{array}$$
and $$\begin{array}{cccc} t: & H &  \longrightarrow &  \mathcal{R}_{G}(\widetilde{\mathcal{A}}_m) \\
& \sigma & \longrightarrow & \widetilde{\sigma}\end{array},$$ 
where $t$ is the isomorphism between $H$ and $\mathcal{R}_{G}(\widetilde{\mathcal{A}}_m)$. Note that $t$ and $\widetilde{t}$ have the following property:
$$\widetilde{t}(\sigma(i))  
 = a_{\sigma(i)}
 = \widetilde{\sigma}(a_i)
 = t(\sigma)(a_i)
 = t(\sigma)(\widetilde{t}(i)) , \forall \sigma \in H, \forall i \in \mathcal{A}_n,$$
as $\widetilde{\sigma}(a_i) = a_{\sigma(i)}, \forall \sigma \in H, \forall i \in \mathcal{A}_n$, since $\mathcal{R}_{G}(\widetilde{\mathcal{A}}_m)$ and $H$ are cyclically isomorphic.

Our embedding is as follows:
$$v = \begin{bmatrix}
p_1 & p_2 & \cdots & p_k \\
\sigma_1 & \sigma_2 & \cdots & \sigma_k \\
q_1 & q_2 & \cdots & q_k \\
\tau_1 & \tau_2 & \cdots & \tau_k
\end{bmatrix} \overset{\iota}{\longrightarrow} \begin{bmatrix}
\widetilde{t}(p_1) & \widetilde{t}(p_2) & \cdots & \widetilde{t}(p_k) \\
t(\sigma_1) & t(\sigma_2) & \cdots & t(\sigma_k) \\
\widetilde{t}(q_1) & \widetilde{t}(q_2) & \cdots & \widetilde{t}(q_k) \\
t(\tau_1) & t(\tau_2) & \cdots &t(\tau_k)
\end{bmatrix}.$$
We see that $\iota$ commutes with expansions and pushings, that is, $\iota(\exp(v)) = \exp(\iota(v))$ and $\iota(\push(v)) = \push(\iota(v)) \ \forall v \in \vnh$:

\begin{align*}
\push(v) & = \begin{bmatrix}
p_1 & p_2 & \cdots & p_k \\
Id & Id & \cdots & Id \\
q_1 & q_2 & \cdots & q_k \\
\tau_1\sigma_1^{-1} & \tau_2\sigma_2^{-1} & \cdots & \tau_k\sigma_k^{-1}
\end{bmatrix}, \\ 
\iota(\push(v)) & = \begin{bmatrix}
\widetilde{t}(p_1) & \widetilde{t}(p_2) & \cdots & \widetilde{t}(p_k) \\
Id & Id & \cdots & Id \\
\widetilde{t}(q_1) & \widetilde{t}(q_2) & \cdots & \widetilde{t}(q_k) \\
t(\tau_1\sigma_1^{-1}) & t(\tau_2\sigma_2^{-1}) & \cdots & t(\tau_k\sigma_k^{-1})
\end{bmatrix}, \\
\push(\iota(v)) &  = \begin{bmatrix}
\widetilde{t}(p_1) & \widetilde{t}(p_2) & \cdots & \widetilde{t}(p_k) \\
Id & Id & \cdots & Id \\
\widetilde{t}(q_1) & \widetilde{t}(q_2) & \cdots & \widetilde{t}(q_k) \\
t(\tau_1)t(\sigma_1^{-1}) & t(\tau_2)t(\sigma_2^{-1}) & \cdots & t(\tau_k)t(\sigma_k^{-1})
\end{bmatrix}.
\end{align*}
As $t$ is an isomorphism of groups, the commutativity follows. On the other hand, suppose that we expand the prefix code $P = \{p_1, \dots, p_k\}$ on $p_i$ (we argue below that $\iota$ commutes with expanding, but our argument uses  the pushed version of $v$: it is easy to see that this is sufficient):
$$\exp(v) = \begin{bmatrix}
p_1  & \cdots & p_i0 & \cdots & p_i(n-1)  & \cdots &  p_k \\
Id  & \cdots & Id & \cdots & Id & \cdots & Id\\
q_1  & \cdots & q_i\concat\tau_i\sigma_i^{-1}(0) & \cdots & q_i\concat\tau_i\sigma_i^{-1}(n-1)  & \cdots & q_k \\
\tau_1\sigma_1^{-1} & \cdots & \tau_i\sigma_i^{-1} & \cdots & \tau_i\sigma_i^{-1} & \cdots & \tau_k\sigma_k^{-1}
\end{bmatrix}.$$
The table for $\iota(\exp(v))$ is: 
$$ \begin{bmatrix}
\widetilde{t}(p_1) & \cdots & \widetilde{t}(p_i0) & \cdots & \widetilde{t}(p_i(n-1))  & \cdots & \widetilde{t}(p_k) \\
Id  & \cdots & Id & \cdots & Id & \cdots & Id\\
\widetilde{t}(q_1)  & \cdots & \widetilde{t}(q_i\concat\tau_i\sigma_i^{-1}(0)) & \cdots & \widetilde{t}(q_i\concat\tau_i\sigma_i^{-1}(n-1))  & \cdots & \widetilde{t}(q_k) \\
t(\tau_1\sigma_1^{-1})  & \cdots & t(\tau_i\sigma_i^{-1}) & \cdots & t(\tau_i\sigma_i^{-1}) & \cdots & t(\tau_k\sigma_k^{-1})
\end{bmatrix},$$
Finally, the table for $\exp(\iota(v))$ is: 
$$\begin{bmatrix}
\widetilde{t}(p_1)  & \cdots & \widetilde{t}(p_i)a_0 & \cdots & \widetilde{t}(p_i)a_{n-1}  & \cdots & \widetilde{t}(p_k) \\
Id  & \cdots & Id & \cdots & Id & \cdots & Id\\
\widetilde{t}(q_1) & \cdots & \widetilde{t}(q_i)\concat t(\tau_i\sigma_i^{-1})(a_0) & \cdots &\widetilde{t}(q_i)\concat t(\tau_i\sigma_i^{-1})(a_{n-1})  & \cdots & \widetilde{t}(q_k) \\
t(\tau_1\sigma_1^{-1})  & \cdots & t(\tau_i\sigma_i^{-1}) & \cdots & t(\tau_i\sigma_i^{-1}) & \cdots & t(\tau_k\sigma_k^{-1})
\end{bmatrix},$$
Both tables $\exp(\iota(v)) = \iota(\exp(v))$ are equal as:
$$ \widetilde{t}(p_i\concat j) = \widetilde{t}(p_i)\concat \widetilde{t}(j) = \widetilde{t}(p_i)\concat \widetilde{t}(a_j),$$
$$\widetilde{t}(q_i\concat\tau_i\sigma_i^{-1}(j)) = \widetilde{t}(q_i) \concat\widetilde{t}(\tau_i\sigma_i^{-1}(j)) = \widetilde{t}(q_i) \concat t(\tau_i\sigma_i^{-1})(\widetilde{t}(j)) = \widetilde{t}(q_i) \concat t(\tau_i\sigma_i^{-1})(a_j).$$

(We have used the concat symbol ``$\concat$'' in these tables and the immediate discussion after, wherever we think it makes terms easier to read.  In our later explanations we will generally refrain from using it.)

To finish the proof, we need to show that $\iota(v) \circ \iota(u) = \iota(v \circ u)$. This follows easily from the fact that $\iota$ commutes with expansions and pushings. Without loss of generality, consider:
$$u = \begin{bmatrix}
p_1 & p_2 & \cdots & p_k \\
Id & Id & \cdots & Id \\
s_1 & s_2 & \cdots & s_k \\
\tau_1 & \tau_2 & \cdots & \tau_k
\end{bmatrix}, \quad \iota(u) = \begin{bmatrix}
\widetilde{t}(p_1) & \widetilde{t}(p_2) & \cdots & \widetilde{t}(p_k) \\
Id & Id & \cdots & Id \\
\widetilde{t}(s_1) & \widetilde{t}(s_2) & \cdots & \widetilde{t}(s_k) \\
t(\tau_1) & t(\tau_2) & \cdots &t(\tau_k)
\end{bmatrix}, $$

$$v = \begin{bmatrix}
s_1 & s_2 & \cdots & s_k \\
\tau_1 & \tau_2 & \cdots & \tau_k \\
q'_1 & q'_2 & \cdots & q'_k \\
\tau'_1 & \tau'_2 & \cdots & \tau'_k
\end{bmatrix}, \quad \iota(v) = \begin{bmatrix}
\widetilde{t}(s_1) & \widetilde{t}(s_2) & \cdots & \widetilde{t}(s_k) \\
t(\tau_1) & t(\tau_2) & \cdots &t(\tau_k)\\
\widetilde{t}(q'_1) & \widetilde{t}(q'_2) & \cdots & \widetilde{t}(q'_k) \\
t(\tau'_1) & t(\tau'_2) & \cdots &t(\tau'_k)
\end{bmatrix}. $$
Thus: 
$$\iota(v) \circ \iota(u) = \iota(v \circ u) = 
\begin{bmatrix}
\widetilde{t}(p_1) & \widetilde{t}(p_2) & \cdots & \widetilde{t}(p_k) \\
Id & Id & \cdots & Id \\
\widetilde{t}(q'_1) & \widetilde{t}(q'_2) & \cdots & \widetilde{t}(q'_k) \\
t(\tau'_1) & t(\tau'_2) & \cdots &t(\tau'_k)
\end{bmatrix}. $$
\end{proof}


\section{Algebraic Embeddings}
In this section we present some algebraic embeddings $\vmg\rightarrowtail \vnh$ between symmetric Thompson's groups (recall here $n-m=k(m-1)$ for some positive $k$, $G\leq \Sym(m)$ and with $H$ a particular extended version of $G$ in $\Sym(n)$). 

We call these embeddings ``algebraic'' as they do not arise via topological conjugacy.  In particular, these embeddings do not preserve the orbit lengths of the points of $\Cn$ (when $n>m$, which is our primary case of interest).

\subsection{Successors} Here, we give the key idea for our algebraic embeddings, which relies on extending an idea of Birget into our context.

The \textit{successor} of an element, expressed as a table, was defined in \cite{B} in order to embed $V_2(Id)$ in $V_n(Id)$, for all $n \geq 2$. We generalise Birget's definition.

\begin{definition}[Set of prefixes]\label{spref}\cite{B}
Let $P \subset \mathcal{A}_n^*$ be a prefix code of $\Cn$. We define the \textit{set of prefixes of $P$}, $\spref(P)$ as follows:
$$ \spref(P) = \{w \in \mathcal{A}_n^* : \exists p \in P,\, w <_{pref} p \}.$$
In other words, $\spref(P)$ is the set of strict prefixes of the elements of $P$.
\end{definition}

We are embedding a symmetric Thompson's group on alphabet $\mathcal{A}_m$ into a symmetric Thompson's group on alphabet $\mathcal{A}_n$, where $m\leq n$.  For this, we will assume $\mathcal{A}_m\subseteq \mathcal{A}_n$.  And in particular we set $\mathcal{A}_m=\{a_0,a_1,\ldots,a_{m-1}\}$ and $\mathcal{A}_n=\mathcal{A}_m\cup\{a_m,a_{m+1},\ldots,a_{n-1}\}$ with symbols with distinct indices being distinct (so that $|\mathcal{A}_n|=n$).

In what follows, we take a prefix code $P\subset a_{m-1}\concat \mathcal{A}_m^*$ (so each element of $P$ begins with the letter $a_{m-1}$) and transform it to a new prefix code $\succ{P}\subset \mathcal{A}_n^*$ by appending letters from the set $\{a_{m},a_{m+1},\ldots,a_n\}$.

\begin{definition}[Successor]\label{successor} Let $P \subset a_{m-1}\concat\{a_0, \dots, a_{m-1}\}^*$ be a prefix code (complete, were we to remove the initial prefix letter $a_{m-1}$, so that $|P|\equiv 1\mod (m-1)$)  with $\vert P \vert = l \geq 1$, and let $\{p_1, \dots, p_l\}$ be the ordered list of all the elements of $P$, using the \emph{reverse} dictionary order. 

We build a new prefix code $\succ{P}$ inductively using our ordered list $(p_1,p_2,\ldots,p_l)$.

Let $k$ be the smallest non-negative integer so that $n-m=k(m-1)$ (this $k$ will exist when $m$ and $n$ satisfy Higman's condition, which we require to build our embeddings).

We define (inductively) nested sets $P_{s,i}$, where $s$ will grow from $1$ to $l$, and for each value of $s$, we will have $i$ grow from $1$ to $k$.

Set $\mathcal{A}_{m,n}:= \{a_m,a_{m+1},\ldots,a_{n-1}\}$.  For every $p_s \in P$, and $i\in \{1,2,\ldots, k\}$ the \textit{$i$-th successor} $(p_s)'_i$ of $p_s$ is the element of $\spref(P)\concat \mathcal{A}_{m,n}$ defined as follows, assuming that 
\[P_{s,i-1} = \left\{\begin{matrix}(p_1)'_1,(p_1)'_{2},\ldots,(p_1)'_{k},\\(p_2)'_1,(p_2)'_{2},\ldots,(p_2)'_{k},\\
\vdots\\
(p_s)'_{1},(p_s)'_{2},\ldots,(p_s)'_{i-1}
\end{matrix}\right\}\]
has already been defined, we set:

\begin{align*}
(p_s)'_i = \min \{ & xa_j \in \spref(P)\concat\mathcal{A}_{m,n}: p_s <_{dict} xa_j  \ \mbox{and} \ xa_j \not\in P_{s,i-1}\},
\end{align*}
where $\min$ uses the dictionary order in $\{a_0, \dots, a_{n-1}\}$.
\end{definition}

\begin{example}
Suppose $m=3$ and $n=5$, so that $k=1$.  In the definition above, $a_{m-1}=2$.  So, consider the set $P=\{20,210,211,212,22\}$.  Now, $k=1$ and $\spref(P)=\{\varepsilon,2,21\}$.  We obtain
\[
\begin{array}{lll}
 p_1=22&\quad&(p_1)'_1=23\\
 p_2=212&\quad&(p_2)'_1=213\\ 
 p_3=211&\quad& (p_3)'_1=214\\
 p_4=210&\quad& (p_4)'_1=24\\
 p_5=20&\quad&(p_5)'_1=3.
\end{array}
\]

\end{example}

\begin{remark}The three constants, $n,m,k$ are not arbitrary, as the system of successors needs to be well defined.  If every element has $k$ successors, then: $$n-m = k(m-1), \, k \geq 0,$$
which is Higman's condition.\end{remark}
\begin{proof}
 If we expand an element $p_i \in P$, we need to assign successors to each element $p_ia_j$ for every $0 \leq j \leq m-1$. In particular, the number of successors $k$ of every leaf does not vary, and each element $p_ia_r$ for every $m \leq r \leq n-1$ needs to be the successor of some element in $\widetilde{P} = (P \backslash \{p_i\}) \cup \{p_ia_0, \dots,p_ia_{m-1}\}$. Then $\widetilde{P}$ has $m-1$ more elements than $P$ and there are $n-m$ new elements $p_ia_r$ for $m \leq r \leq n-1$. Thus, we need $m-1$ to evenly divide $n-m$, and $k$ is the factor of this division.
\end{proof}

We proceed to prove the following lemma, essential for the proof of Theorem \ref{thm:MT2}: 

\begin{lemma}\label{lem:successor}Suppose $m\leq n$ are naturals so that there is $k$ natural with $n-m=k(m-1)$. Suppose $l$ is a positive integer congruent to $m$ modulo $m-1$. Let $S= \{ a_m , \dots, a_{n-1}\}$ and let $P \subset a_{m-1}\concat\{a_0, \dots, a_{m-1}\}^*$ be an $l$-element prefix code, ordered as $p_l <_{dict} p_{l-1}<_{dict}\dots <_{dict} p_1$.  Let $i$ with $1\leq i\leq l$.  Then, the successors $(p_i)'_1$, $(p_i)'_2$, $\ldots$, $(p_i)'_k$ are well defined, and furthermore, the expansion in which we replace $P$ by $\widetilde{P} = (P \backslash \{p_i\} ) \cup p_i\{a_0, \dots, a_{m-1}\}$ has successors $(p_ia_j)'_i$ uniquely determined as follows: 
$$\begin{array}{lll}
(p_ia_{m-1})'_1 & = p_ia_{m}\\
& \vdots \\ (p_ia_{m-1})'_k & = p_ia_{m+k-1}\\
(p_ia_{m-2})'_1 & = p_ia_{m+k}\\
& \vdots \\ (p_ia_{m-2})'_k & = p_ia_{m+2k-1}\\
& \vdots \\
(p_ia_{1})'_1 & = p_ia_{m+(m-2)k}\\ 
& \vdots \\
(p_ia_{1})'_k & = p_ia_{m+(m-1)k-1} = p_ia_n\\
(p_ia_{0})'_1 & = (p_i)'_1\\ 
& \vdots \\
(p_ia_{0})'_k & = p_ia_{m+(m-1)k} = (p_i)'_k.\\
\end{array}$$
\end{lemma}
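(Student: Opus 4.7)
The plan is to induct on the number of expansions that produce $P$ from the trivial code $\{a_{m-1}\}$, equivalently on $(l-1)/(m-1)$; since every admissible prefix code in $a_{m-1} \concat \{a_0,\ldots,a_{m-1}\}^*$ arises this way, this induction covers all cases. The base case $P = \{a_{m-1}\}$ is direct: $\spref(P) \concat \mathcal{A}_{m,n} = \{a_m, \ldots, a_{n-1}\}$, all of these are $>_{dict} a_{m-1}$, and the $k$ dictionary-smallest are $a_m, \ldots, a_{m+k-1}$, so $(p_1)'_j = a_{m+j-1}$. The inductive step fuses the two halves of the lemma: assuming the successors of $P$ are well-defined, I will derive the expansion formula for $\widetilde{P}$, and this automatically establishes well-definedness for the expanded code, closing the induction.

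The core of the argument consists of three dictionary-order observations. First, passing from $P$ to $\widetilde{P}$ adds exactly the single internal node $p_i$ to $\spref$, enlarging the candidate set by $\{p_i a_m, \ldots, p_i a_{n-1}\}$, whose size $n - m = k(m-1)$ is precisely the net increase in successor demand when $p_i$ is replaced by the block $\{p_i a_0, \ldots, p_i a_{m-1}\}$. Second, for any $s < i$ the elements $p_i$ and $p_s$ are incomparable in the prefix order (both belong to the prefix code $P$), so $p_i <_{dict} p_s$ is decided at some position inside a common prefix; in particular $p_i a_r <_{dict} p_s$ for every $r$, which means the new candidates are invisible to the successor computations for $p_1, \ldots, p_{i-1}$ and those assignments transfer to $\widetilde{P}$ unchanged. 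Third, and most importantly, for any surviving old candidate $wa_\ell \in \spref(P) \concat \mathcal{A}_{m,n}$ with $wa_\ell >_{dict} p_i$, the same divergence-position argument yields $wa_\ell >_{dict} p_i a_r$ for every $r$; hence every new candidate $p_i a_r$ is dictionary-smaller than all of $(p_i)'_1, \ldots, (p_i)'_k$, and among themselves the new candidates are ordered $p_i a_m <_{dict} \cdots <_{dict} p_i a_{n-1}$.

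Granted these observations, the calculation is mechanical: processing the new block in reverse dictionary order $p_i a_{m-1} > p_i a_{m-2} > \cdots > p_i a_0$, at the stage $p_i a_{m-1-t}$ (for $t = 0, 1, \ldots, m-2$) the $k$ dictionary-smallest candidates strictly exceeding $p_i a_{m-1-t}$ that have not yet been taken are $p_i a_{m+tk}, \ldots, p_i a_{m+(t+1)k-1}$, matching the stated formula. At the final stage $p_i a_0$ the entire fresh pool is exhausted, since $(m-1)k = n-m$, leaving the $k$ smallest available candidates as $(p_i)'_1, \ldots, (p_i)'_k$, again as stated. The main obstacle I anticipate is the third observation above, i.e.\ the claim that any old candidate dominating $p_i$ in dictionary order automatically dominates every extension $p_i a_r$. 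This reduces to ruling out $p_i \leq_{pref} w$ (which would contradict $P$ being a prefix code, since $w$ is a strict prefix of some element of $P$) and then a short case split according to whether $w$ is a proper prefix of $p_i$ or is incomparable with it; in both cases the letter at the divergence position already decides the comparison independently of what is appended to $p_i$. Once this is secured, the remainder of the proof is tidy bookkeeping.
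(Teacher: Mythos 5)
Your proposal is correct and follows essentially the same route as the paper: induction on the number of expansions from the singleton code $\{a_{m-1}\}$, with the observation that the assignments for $p_1,\dots,p_{i-1}$ transfer unchanged (the paper's $P_{i-1,k}=\widetilde{P}_{i-1,k}$) and that the fresh candidates $p_ia_m,\dots,p_ia_{n-1}$ are then forced in order, leaving the old successors of $p_i$ for $p_ia_0$. The only difference is stylistic: you justify the forcing by direct dictionary-order domination (your third observation), whereas the paper argues by contradiction on where such a candidate would previously have been assigned.
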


\begin{proof}
We prove the two statements by induction on $l$.  

{\flushleft {\it Base Case ($l=1$):}}\\
If $l=1$ then $P=\{a_{m-1}\}$.  We have $\spref{P}=\{\ew\}$.  It then follows that the $k$ successors are, the set $\{a_m,a_{m+1},\ldots,a_{m+k-1}\}$, noting that these are given in order and are the results of the inductive definition of the $k$ successors of $a_{m-1}$.  Thus we have in the base case that the successors are well defined.  We need to verify the existence of well defined successors for an expansion of $P=\{a_{m-1}\}$.  In this case, $P$ admits only one expansion, which is precisely the set $\widetilde{P}=\{a_{m-1}a_{m-1},a_{m-1}a_{m-2},\ldots,a_{m-1}a_0\}.$  We have $\spref({\widetilde{P}})=\{\ew,a_{m-1}\}$ and we have $$\begin{array}{lll}
(a_{m-1}a_{m-1})'_1 & = a_{m-1}a_{m}\\
& \vdots \\ (a_{m-1}a_{m-1})'_k & = a_{m-1}a_{m+k-1}\\
(a_{m-1}a_{m-2})'_1 & = a_{m-1}a_{m+k}\\
& \vdots \\ (a_{m-1}a_{m-2})'_k & = a_{m-1}a_{m+2k-1}\\
& \vdots \\
(a_{m-1}a_{1})'_1 & = a_{m-1}a_{m+(m-2)k}\\ 
& \vdots \\
(a_{m-1}a_{1})'_k & = a_{m-1}a_{m+(m-1)k-1} = a_{m-1}a_n\\
(a_{m-1}a_{0})'_1 & = (a_{m-1})'_1\\ 
& \vdots \\
(a_{m-1}a_{0})'_k & = a_{m-1}a_{m+(m-1)k} = (a_{m-1})'_k.\\
\end{array}$$
We can directly observe these successors are well defined and distinct.  Thus, the statement is true for $l=1$.

{\flushleft {\it Inductive Case ($l>1$):}}\\
Now let us assume that $\widetilde{P}$ is a result of $v$ expansions from the one-element prefix code $\{a_{m-1}\}$, for some $v\geq 1$, where for any prefix code resulting from $u$ expansions from $\{a_{m-1}\}$ for $0\leq u<v$ the statement of the lemma holds.  We will show that the successors of our expansion $\widetilde{P}$ are well defined.
In particular, if $P$ is the prefix code (of size $l$) arising from doing only the first $t-1$ expansions from $\{a_{m-1}\}$ towards the prefix code $\widetilde{P}$, and $p_i$ is the element of $P$ which is being replaced by and $m$-fold expansion to create $\widetilde{P}$, then by induction, the successors $(p_i)'_1$, $(p_i)'_2$, $\ldots$, $(p_i)'_k$ of $p_i\in P$ are well defined.

Note that $P_{i-1,k} = \widetilde{P}_{i-1,k}$, as the involved subsets of both $P$ and $\widetilde{P}$ are equal. Thus the first successor to assign is $(p_ia_{m-1})'_1$. Suppose that $(p_ia_{m-1})'_1 <_{dict} p_ia_m$, then $(p_ia_{m-1})'_1 = pa_t$ for some $p \in \spref(P): p <_{dict} p_i$ and $a_t \in \{a_m, \dots,a_{n-1} \}$. Thus, before expanding $P$, $pa_t$ is one of the successors of some $p_r \in P$. 

If $p_r <_{dict} p_i$, then the set of successors of $p_i$ is defined before the set of successors of $p_r$. Because $p_i$ is expanded, the set of $k$ successors of $p_ia_{m-1}$ is equal to the set of successors of $p_i$, and this set does not contain $pa_t$, which is a contradiction. On the other hand, if $p_r >_{dict} p_i$, then $pa_t \in P_{i-1,k} = \widetilde{P}_{i-1,k}$, which is also a contradiction. Thus $(p_ia_{m-1})'_1 = p_ia_m$. We can use a similar argument for all $(p_ia_{m-1})'_1  \dots (p_ia_{1})'_k$. 

For $p_ia_0$, all successors of the form $p_ia_s,\, a_s \in \{a_m ,\dots, a_{n-1}\}$, have already been assigned.  Thus, the remaining $k$ successors are precisely the $k$ successors of $p_i$, taken in order.
\end{proof}

\begin{remark}\label{rem:successor}
 Birget in \cite{B} gives a formula for the $i$-th successor of an element, for the case of $m=2$.  The statement of Lemma \ref{lem:successor} above shows the natural generalisation of that formula holds when we have the Higman Condition (as we must for successors to be well defined).  The resulting formula is given as follows:
 
Let $P \subset a_{m-1}\concat\{a_0, \dots, a_{m-1}\}^*$ be a prefix code with $\vert P \vert \geq 2$, such that the elements of $P$ are ordered in reverse dictionary order. Then every element of $w \in P$ can be written uniquely in the form $ua_ia_0^t$, where $u\in \{a_0 , \dots, a_{m-1}\}^*$ and $t \geq 0$. The $i$-th successor of $w$ is:
$$(w)'_i = (ua_ja_0^t)'_i = ua_{m-1+(m-1-j)k + i}.$$ 
We stress that this formula is only valid if $P$ is ordered in reverse dictionary order.
\end{remark} 

\subsection{The algebraic embedding}
We proceed to define the algebraic embedding of $\vmg$ in $\vnh = V_n(G_{ext})$. Let $g \in \vmg$, given by the following table:
\begin{align*}
g & = \begin{bmatrix}
p_1 & p_2 & \cdots & p_l \\
\sigma_1 & \sigma_2 & \cdots & \sigma_l\\
q_1 & q_2 & \cdots & q_l \\
\tau_1 & \tau_2 & \cdots & \tau_l
\end{bmatrix}
\end{align*}
We define the embedding $\iota(g)$ below.  The resulting tables are large, and our notation requires some explanation.  The idea of the embedding is to use the identity map initially, and at $a_{m+k}$ and later letters, but under the address $a_{m-1}$ we place the prefix code $p_1$ to $p_l$, and we also require action under the successors.  The first row then has entries following the ordered list given here (wrapped at natural locations due to page length constraints):
\[
\begin{matrix}
a_0,a_1,\ldots,a_{m-2},\\
a_{m-1}p_1,a_{m-1}p_2,\ldots,a_{m-1}p_l,\\
(a_{m-1}p_1)'_1,(a_{m-1}p_2)'_1,  \ldots, (a_{m-1}p_l)'_1,\\
(a_{m-1}p_1)'_2,(a_{m-1}p_2)'_2,  \ldots, (a_{m-1}p_l)'_2,\\
\dots\\
(a_{m-1}p_1)'_k,(a_{m-1}p_2)'_k,  \ldots, (a_{m-1}p_l)'_k,\\
a_{m+k},a_{m+k+1},\ldots,a_{n-1}.
\end{matrix}
\]
We use vertical bars ``$\vert$'' in our table at the same locations that we placed line-wraps in the row detailed above, for clarity of grouping.  The element $\iota(g)$ is now given by the following table:
\begin{flushleft}
$\begin{array}{r}
\left[\rule{0cm}{1cm} \setlength\arraycolsep{1pt} \begin{array}{ccccccccccccc}
a_0 & \cdots & a_{m-2} & \vert & a_{m-1}p_1 & \cdots & a_{m-1}p_l & \vert & (a_{m-1}p_1)'_1 & \cdots &(a_{m-1}p_l)'_1 &\vert & \cdots \\
Id & \cdots & Id & \vert & \sigma'_1 &\cdots & \sigma'_l & \vert & \sigma'_1 &\cdots & \sigma'_l & \vert & \cdots\\
a_0 & \cdots & a_{m-2} & \vert & a_{m-1}q_1 & \cdots & a_{m-1}q_l & \vert & (a_{m-1}q_1)'_1 & \cdots &(a_{m-1}q_l)'_1 &\vert & \cdots \\
Id & \cdots & Id & \vert & \tau'_1 &\cdots & \tau'_l & \vert & \tau'_1 &\cdots & \tau'_l & \vert & \cdots  \\
\end{array} \color{white}\right] \\ \\

 \color{white} =  \left[\rule{0cm}{1cm} \setlength\arraycolsep{1pt} \color{black} \begin{array}{ccccccccccccc}
\cdots  & \vert & (a_{m-1}p_1)'_k & \cdots & (a_{m-1}p_l)'_k & \vert & a_{m+k} & \cdots & a_{n-1} \\
\cdots & \vert &  \sigma'_1 &\cdots & \sigma'_l & \vert & Id & \cdots & Id \\
\cdots  & \vert & (a_{m-1}q_1)'_k & \cdots & (a_{m-1}q_l)'_k & \vert & a_{m+k} & \cdots & a_{n-1} \\
\cdots & \vert & \tau'_1 &\cdots & \tau'_l & \vert & Id & \cdots & Id \\ 
\end{array} \right]
\end{array}$
\end{flushleft}

Note that the set of successors of $P = \{p_1, \dots, p_l\}$ are assigned supposing that $p_l <_{dict} \dots <_{dict} p_1$. Therefore, the set of successors of $Q = \{q_1, \dots, q_l\}$ is assigned following the order $q_l \rightarrow \dots \rightarrow q_1$, which does not need to follow the dictionary order on $Q$. 

Indeed, the first and third rows of $\iota(g)$ are both prefix codes of $\Cn$. On the one hand, suppose that the number of columns of $g$ is $l = m + d(m-1)$ for some $d \geq 0$. It follows that the number of columns of $\iota(g)$ whose elements of the first row start with $a_{m-1}$ is $n+ d(n-1)$ (observe that the last $k$ terms from the successor substitution will not begin with $a_{m-1}$). On the other hand, as the number of columns of $g$ is $(m + d(m-1))$, and we assign $k$ successors to every column, we have $(m+d(m-1))(k+1)$ columns on $\iota(g)$. As we have Higman's Condition, the reader can verify that $(m+d(m-1))(k+1) = n+ d(n-1) +k$.  From this, we see firstly that $(n-1)\vert k$, but more importantly, this embedding/successor operation does not place any constraints on the  number of expansions $d$ that were used to create the original prefix code for the domain of $g$.

\begin{proof}[Proof of Theorem \ref{thm:MT2}]
If we push down the action of every $\sigma_i$, we have:
\begin{align*}
\push(g) & = \begin{bmatrix}
p_1 & p_2 & \cdots & p_l \\
Id & Id & \cdots & Id\\
q_1 & q_2 & \cdots & q_l \\
\tau_1\sigma_1^{-1}  & \tau_2\sigma_2^{-1} & \cdots & \tau_l\sigma_l^{-1} \\
\end{bmatrix}  
\end{align*}
Thus the table for $\iota(\push(g))$ is:
\begin{flushleft}
$\begin{array}{r}
\left[\rule{0cm}{1cm} \setlength\arraycolsep{1pt} \begin{array}{ccccccccccccc}
a_0 & \cdots & a_{m-2} & \vert & a_{m-1}p_1 & \cdots & a_{m-1}p_l & \vert & (a_{m-1}p_1)'_1 & \cdots &(a_{m-1}p_l)'_1 &\vert & \cdots \\
Id & \cdots & Id & \vert & Id &\cdots & Id & \vert & Id &\cdots & Id & \vert & \cdots\\
a_0 & \cdots & a_{m-2} & \vert & a_{m-1}q_1 & \cdots & a_{m-1}q_l & \vert & (a_{m-1}q_1)'_1 & \cdots &(a_{m-1}q_l)'_1 &\vert & \cdots \\[2pt]
Id & \cdots & Id & \vert & (\tau_1\sigma_1^{-1})' &\cdots & (\tau_l\sigma_l^{-1})' & \vert & (\tau_1\sigma_1^{-1})' &\cdots & (\tau_l\sigma_l^{-1})' & \vert & \cdots  \\
\end{array} \color{white}\right] \\ \\

 \color{white} =  \left[\rule{0cm}{1cm} \setlength\arraycolsep{1pt} \color{black} \begin{array}{ccccccccccccc}
\cdots  & \vert & (a_{m-1}p_1)'_k & \cdots & (a_{m-1}p_l)'_k & \vert & a_{m+k} & \cdots & a_{n-1} \\
\cdots & \vert &  Id &\cdots & Id & \vert & Id & \cdots & Id \\
\cdots  & \vert & (a_{m-1}q_1)'_k & \cdots & (a_{m-1}q_l)'_k & \vert & a_{m+k} & \cdots & a_{n-1} \\[2pt]
\cdots & \vert & (\tau_1\sigma_1^{-1})' &\cdots & (\tau_l\sigma_l^{-1})' & \vert & Id & \cdots & Id \\
\end{array} \right]
\end{array}$
\end{flushleft}
On the other hand the table for $\push(\iota(g))$ is:
\begin{flushleft}
$\begin{array}{r}
\left[\rule{0cm}{1cm} \setlength\arraycolsep{1pt} \begin{array}{ccccccccccccc}
a_0 & \cdots & a_{m-2} & \vert & a_{m-1}p_1 & \cdots & a_{m-1}p_l & \vert & (a_{m-1}p_1)'_1 & \cdots &(a_{m-1}p_l)'_1 &\vert & \cdots \\
Id & \cdots & Id & \vert & Id &\cdots & Id & \vert & Id &\cdots & Id & \vert & \cdots\\
a_0 & \cdots & a_{m-2} & \vert & a_{m-1}q_1 & \cdots & a_{m-1}q_l & \vert & (a_{m-1}q_1)'_1 & \cdots &(a_{m-1}q_l)'_1 &\vert & \cdots \\[2pt]
Id & \cdots & Id & \vert & \tau'_1(\sigma'_1)^{-1} &\cdots & \tau'_l(\sigma'_l)^{-1} & \vert & \tau'_1(\sigma'_1)^{-1} &\cdots & \tau'_l(\sigma'_l)^{-1} & \vert & \cdots  \\
\end{array} \color{white}\right] \\ \\

  \color{white} =  \left[\rule{0cm}{1cm} \setlength\arraycolsep{1pt} \color{black} \begin{array}{ccccccccccccc}
\cdots  & \vert & (a_{m-1}p_1)'_k & \cdots & (a_{m-1}p_l)'_k & \vert & a_{m+k} & \cdots & a_{n-1} \\
\cdots & \vert &  Id &\cdots & Id & \vert & Id & \cdots & Id \\
\cdots  & \vert & (a_{m-1}q_1)'_k & \cdots & (a_{m-1}q_l)'_k & \vert & a_{m+k} & \cdots & a_{n-1} \\[2pt]
\cdots & \vert & \tau'_1(\sigma'_1)^{-1} &\cdots & \tau'_l(\sigma'_l)^{-1} & \vert & Id & \cdots & Id \\ 
\end{array} \right]
\end{array}$
\end{flushleft}

Recall from the statement of Theorem \ref{thm:MT2} that for an element $\tau\in\Sym(m)$, the extended version $\tau'$ of $\tau$ in $\Sym(n)$ is that element of $\Sym(n)$ which agrees with $\tau$ on the set $\mathcal{A}_m$ and acts as the identity on the points of $\mathcal{A}_{m,n}$ in $\mathcal{A}_n$.  Thus, both tables are equal, as $(\tau_i\sigma_i^{-1})' = (\tau'_i)(\sigma'_i)^{-1},\, \forall i \in \{i,\dots,l\}$. If we expand $g$ on $p_i$:

$$\exp(g) = \begin{bmatrix}
p_1 & p_2 & \cdots & p_ia_0 & \cdots & p_ia_{m-1} & \cdots  &  p_l \\
Id & Id & \cdots & Id & \cdots & Id& \cdots & Id\\
q_1 & q_2 & \cdots & q_i \tau_i(a_0) & \cdots & q_i\tau_i(a_{m-1}) & \cdots  &  q_l \\
\tau_1 & \tau_2 & \cdots & \tau_i & \cdots & \tau_i& \cdots & \tau_l
\end{bmatrix}$$
Then the table for $\iota(\exp(g))$ is:
\begin{flushleft}
$\begin{array}{r}
\left[\rule{0cm}{1cm} \setlength\arraycolsep{1pt} \begin{array}{ccccccccccccc}
a_0 & \cdots & a_{m-2} & \vert & a_{m-1}p_1 & \cdots & a_{m-1}p_ia_0 & \cdots & a_{m-1}p_ia_{m-1} & \cdots & a_{m-1}p_l &\vert & \cdots \\
Id & \cdots & Id & \vert & Id &\cdots & Id & \cdots & Id &\cdots  & Id & \vert & \cdots\\
a_0 & \cdots & a_{m-2} & \vert & a_{m-1}q_1 & \cdots & a_{m-1}q_i\tau_i(a_0) & \cdots & a_{m-1}q_i\tau_i(a_{m-1}) & \cdots & a_{m-1}q_l &\vert & \cdots \\
Id & \cdots & Id & \vert & \tau'_1 &\cdots & \tau'_i  &\cdots & \tau'_i  &\cdots & \tau'_l & \vert & \cdots  \\
\end{array} \color{white}\right] \\ \\
\color{white} =  \left[\rule{0cm}{1cm} \setlength\arraycolsep{1pt} \color{black} \begin{array}{ccccccccccccc}
\cdots  & \vert  & \cdots  & (a_{m-1}p_ia_0)'_j & \cdots & (a_{m-1}p_ia_{m-1})'_j & \cdots & \vert & a_{m+k} & \cdots & a_{n-1} \\
\cdots  & \vert  & \cdots  & Id & \cdots & Id & \cdots & \vert & Id &\cdots & Id \\
\cdots   & \vert  & \cdots  & (a_{m-1}q_i\tau_i(a_0))'_j & \cdots & (a_{m-1}q_i\tau_i(a_{m-1}))'_j & \cdots& \vert & a_{m+k} & \cdots & a_{n-1}  \\
\cdots  & \vert& \cdots & \tau'_i &\cdots & \tau'_i & \cdots & \vert & Id & \cdots & Id \\ 
\end{array} \right]
\end{array}$
\end{flushleft}

On the other hand, the table for $\exp(\iota(g))$ is:

\begin{flushleft}
$\begin{array}{r}
\left[\rule{0cm}{1cm} \setlength\arraycolsep{1pt} \begin{array}{ccccccccccccc}
a_0 & \cdots & a_{m-2} & \vert & a_{m-1}p_1 & \cdots & a_{m-1}p_ia_0 & \cdots & a_{m-1}p_ia_{n-1} & \cdots & a_{m-1}p_l &\vert & \cdots \\
Id & \cdots & Id & \vert & Id &\cdots & Id & \cdots & Id &\cdots  & Id & \vert & \cdots\\
a_0 & \cdots & a_{m-2} & \vert & a_{m-1}q_1 & \cdots & a_{m-1}q_i\tau'_i(a_0) & \cdots & a_{m-1}q_i\tau'_i(a_{n-1}) & \cdots & a_{m-1}q_l &\vert & \cdots \\
Id & \cdots & Id & \vert & \tau'_1 &\cdots & \tau'_i  &\cdots & \tau'_i  &\cdots & \tau'_l & \vert & \cdots  \\
\end{array} \color{white}\right] \\ \\
\color{white} =  \left[\rule{0cm}{1cm} \setlength\arraycolsep{1pt} \color{black} \begin{array}{ccccccccccccc}
\cdots  & \vert &\cdots & (a_{m-1}p_1)'_j & \cdots & (a_{m-1}p_i)'_j & \cdots & (a_{m-1}p_l)'_j & \cdots & \vert & a_{m+k} & \cdots & a_{n-1} \\
\cdots & \vert &\cdots &  Id &\cdots &  Id &\cdots & Id &\cdots & \vert & Id & \cdots & Id \\
\cdots  & \vert & \cdots & (a_{m-1}q_1)'_j  & \cdots & (a_{m-1}q_i)'_j & \cdots & (a_{m-1}q_l)'_j & \cdots & \vert & a_{m+k} & \cdots & a_{n-1} \\
\cdots & \vert &\cdots & \tau'_1 &\cdots & \tau'_i &\cdots & \tau'_l & \cdots & \vert & Id & \cdots & Id \\ 
\end{array} \right]
\end{array}$
\end{flushleft}
It is straightforward to check that for both tables the columns starting at $a_{m-1}p_ia_s$ for $0 \leq s \leq m-1$ are equal, as $\tau'_i(s) = \tau_i(s), \forall s \in  \{0,\dots, m-1\}$.

For $ m \leq s \leq n-1$ by Lemma \ref{lem:successor}, we know there is a correspondence between the first two rows of $\iota(\exp(g))$ and the first two rows of $\exp(\iota(g))$. On the other hand, from the description of the algebraic embedding, the order in which successors for every $q_j$ are selected depends only on the order of $\{p_1, \dots, p_l\}$, so we have the following calculations:
$$\begin{array}{lllll}
(a_{m-1}q_i\tau_i(a_{m-1}))'_1 & = a_{m-1}q_ia_{m} & = a_{m-1}q_i\tau'_i(a_{m}) \\
& \vdots & \vdots &\\ 
(a_{m-1}q_i\tau_i(a_{m-1}))'_k & = a_{m-1}q_ia_{m+k-1} & = a_{m-1}q_i\tau'_i(a_{m+k-1}) \\
(a_{m-1}q_i\tau_i(a_{m-2}))'_1 & = a_{m-1}q_ia_{m+k} & = a_{m-1}q_i\tau'_i(a_{m+k}) \\
& \vdots &\vdots & \\
(a_{m-1}q_i\tau_i(a_{m-2}))'_k  & = a_{m-1}q_ia_{m+2k-1} & = a_{m-1}q_i\tau'_i(a_{m+2k-1})\\
& \vdots& \vdots & \\
(a_{m-1}q_i\tau_i(a_{1}))'_1  & = a_{m-1}q_ia_{m+(m-2)k} & = a_{m-1}q_i\tau'_i(a_{m+(m-2)k})\\ 
& \vdots &\vdots & \\
(a_{m-1}q_i\tau_i(a_{1}))'_k & = a_{m-1}q_ia_n & = a_{m-1}q_i\tau'_i(a_{m+(m-1)k-1}) \\
(a_{m-1}q_i\tau_i(a_{0}))'_1 & = (a_{m-1}q_i)'_1\\ 
& \vdots \\
(a_{m-1}q_i\tau_i(a_{0}))'_k & =  (a_{m-1}q_i)'_k.\\
\end{array}$$
That is, e.g., $(a_{m-1}q_i\tau_i(a_{m-1}))'_1$ comes first in the choice of successor as $(a_{m-1}p_ia_{m-1})'_1$ appears first under the order of the $p_i$, independent of $\tau_i$.  Thus, the latter two rows of these tables are also equivalent.

Finally, it is easy to see that $\iota(h \circ g) = \iota(h) \circ \iota(g)$, as $\iota$ commutes with expansions and pushings. We only need to obtain row equality on the first part of the table as the remaining part depends entirely on $P$ (resp. on $Q$ for the element $h$): 
\begin{align*}
\iota(g) & = \begin{bmatrix}
\cdots  & a_{m-1}p_i & \cdots & (a_{m-1}p_i)'_j & \cdots \\
\cdots  & Id & \cdots & Id &\cdots  \\
\cdots  & a_{m-1}q_i& \cdots & (a_{m-1}q_i)'_j & \cdots\\
\cdots &\tau'_i & \cdots & \tau'_i&\cdots\\ 
\end{bmatrix},\\ 
\end{align*}
\begin{align*}
\iota(h) & = \begin{bmatrix}
 \cdots & a_{m-1}q_i& \cdots & (a_{m-1}q_i)'_j& \cdots\\
\cdots & \tau'_i  & \cdots& \tau'_i & \cdots\\
 \cdots & a_{m-1}r_i & \cdots & (a_{m-1}r_i)'_j& \cdots\\
 \cdots & \tau''_i & \cdots& \tau''_i& \cdots\\ 
\end{bmatrix},\\ 
\iota(h) \circ \iota(g) & = \begin{bmatrix}
 \cdots & a_{m-1}p_i& \cdots & (a_{m-1}p_i)'_j& \cdots\\
\cdots & Id  & \cdots& Id & \cdots\\
 \cdots & a_{m-1}r_i & \cdots & (a_{m-1}r_i)'_j& \cdots\\
 \cdots & \tau''_i & \cdots& \tau''_i& \cdots \\ 
\end{bmatrix} = \iota(h \circ g).
\end{align*}  Thus the result follows. 
\end{proof}
\label{Bibliography}
\bibliographystyle{abbrv} 
\bibliography{Bibliography} 

\end{document}